\documentclass[12pt]{amsart}
\usepackage{mathrsfs}
\usepackage[a4paper,asymmetric]{geometry}
\usepackage{mathscinet}
\usepackage{latexsym}
\usepackage{amsthm}
\usepackage{amssymb}
\usepackage{amsfonts}
\usepackage{amsmath}
\newtheorem{theorem}{Theorem}[section]
\newtheorem{thm}[theorem]{Theorem}

\newtheorem{lem}[theorem]{Lemma}
\newtheorem{proposition}[theorem]{Proposition}

\newtheorem{corollary}[theorem]{Corollary}
\newtheorem{assumption}[theorem]{Assumption}
\theoremstyle{definition}

\newtheorem{defn}[theorem]{Definition}

\theoremstyle{remark}

\newtheorem{rem}[theorem]{Remark}
\numberwithin{equation}{section}

 \DeclareMathAlphabet{\mathpzc}{OT1}{pzc}{m}{it}
 

 \newcommand{\E}{\mathbb{E}}            
 \newcommand{\T}{\mathbb{T}}
 \newcommand{\e}{\varepsilon}
 \newcommand{\p}{\partial}

 \newcommand{\Ll}{\langle}
 \newcommand{\Rr}{\rangle}

 \newcommand{\N}{\mathbb{N}}
 \newcommand{\R}{\mathbb{R}}
 \newcommand{\Z}{\mathbb{Z}}
 \newcommand{\PP}{\mathbb{P}}
 \newcommand{\mcl}{\mathcal}
 
 \newcommand{\Be}{\begin{equation}}
 \newcommand{\Ee}{\end{equation}}
 \newcommand{\Bs}{\begin{split}}
 \newcommand{\Es}{\end{split}}
  \newcommand{\Bes}{\begin{equation*}}
 \newcommand{\Ees}{\end{equation*}}
 \newcommand{\BT}{\begin{thm}}
 \newcommand{\ET}{\end{thm}}
 \newcommand{\Bp}{\begin{proof}}
 \newcommand{\Ep}{\end{proof}}
 \newcommand{\BL}{\begin{lem}}
 \newcommand{\EL}{\end{lem}}
 \newcommand{\BP}{\begin{proposition}}
 \newcommand{\EP}{\end{proposition}}
 \newcommand{\BC}{\begin{corollary}}
 \newcommand{\EC}{\end{corollary}}
 \newcommand{\BR}{\begin{rem}}
 \newcommand{\ER}{\end{rem}}
 \newcommand{\BD}{\begin{defn}}
 \newcommand{\ED}{\end{defn}}
 \newcommand{\BI}{\begin{itemize}}
 \newcommand{\EI}{\end{itemize}}
 \newcommand{\eqn}{equation}
 \newcommand{\tl}{\tilde}
 \newcommand{\lt}{\left}
 \newcommand{\rt}{\right}
  \newcommand{\lf}{\left}
 \newcommand{\rg}{\right}

\begin{document}
\title[Applications of technique to convolution]
{Applications of a simple but useful technique to stochastic convolution of $\alpha$-stable processes}
\author[L. Xu]{Lihu Xu}
\address{Department of Mathematics, Brunel University,
Kingston Lane,
Uxbridge,
Middlesex UB8 3PH, United Kingdom}
\email{Lihu.Xu@brunel.ac.uk}
\begin{abstract} \label{abstract}
Our simple but useful technique is using an integration by parts to split the
stochastic convolution into two terms. We develop five applications for this technique.
The first one is getting a uniform estimate of stochastic convolution of $\alpha$-stable processes.
Since $\alpha$-stable noises only have $p<\alpha$ moment, unlike the stochastic convolution of
Wiener process, the well known Da Prato-Kwapie\'n-Zabczyk's factorization (\cite{DPKZ87})
is not applicable. Alternatively, combining this technique with Doob's martingale inequality,
we obtain a uniform estimate similar to
that of stochastic convolution of Wiener process.
Using this estimate, we show that the stochastic convolution of $\alpha$-stable noises stays, with positive probability, in arbitrary small ball with zero center. These two results are important for studying
ergodicity and regularity of stochastic PDEs forced by $\alpha$-stable noises (\cite{DoXuZh12}).
The third application is getting the same results as in \cite{LiuZha12}.
The fourth one gives the trajectory regularity of stochastic Burgers equation forced by $\alpha$-stable noises (\cite{DoXuZh12}). Finally, applying a similar integration by parts to stochastic
convolution of Wiener noises, we get the uniform estimate and continuity property
originally obtained by Da Prato-Kwapie\'n-Zabczyk's factorization.
\\ \\
{\bf Keywords}: Integration by parts technique, Da Prato-Kwapie\'n-Zabczyk's factorization technique,
Uniform estimate of stochastic convolution of $\alpha$-stable noises, Regularity of stochastic convolution of $\alpha$-stable noises, Regularity of stochastic PDEs forced by $\alpha$-stable noises. \\
{\bf Mathematics Subject Classification (2000)}: \ {60H05, 60H15, 60H30}. \\
\end{abstract}

\maketitle

\section{Introduction}
In the study of stochastic PDEs (SPDEs) forced by Wiener noises, we often need to
use a uniform estimate of the stochastic convolution as following
\Be \label{e:WieOU}
\E \sup_{0 \le t \le T} \left\|\int_0^t e^{-A(t-s)} dW_t\right\|^{p} \le C,
\Ee
where $p>0$, $C$ only depends on $p$ and $W_t$ is a $Q$-Wiener process (see \cite[Chapter 4]{DPZ92}).
The estimate \eqref{e:WieOU} has many applications such as ergodicity (\cite{DPZ96}, \cite{DPZ92}, \cite{DO06}) and
regularity of the solutions (\cite{DPZ92}, \cite{DPZ96}). \\

To show \eqref{e:WieOU}, we usually use the well known Da Prato-Kwapie\'n-Zabczyk's factorization technique (\cite{DPZ92}).
This technique requires that the noises has some $p>2$ moments. For an $\alpha$-stable
process ($0<\alpha<2$), it only has $p<\alpha$ moment. Therefore, the factorization technique is not applicable
in this case. \\

Alternatively, we use an integration by parts technique to split
the convolution into two parts and then use Doob's martingale inequality to obtain the uniform estimates.
Applying this uniform estimate, we show that the stochastic convolution of $\alpha$-stable noises stays,
with positive probability, in arbitrary small ball with zero center. These two results are new and important for studying the ergodicity and regularity of SPDEs forced by $\alpha$-stable noises (\cite{DoXuZh12}).
\\

This integration by parts technique has several other applications.
The third and fourth applications are studying the trajectory property of stochastic convolution and stochastic Burgers equations.
We, in particular, get the same results as
in \cite{LiuZha12}. Finally, applying a similar integration by parts to stochastic
convolution of Wiener noises, we get a uniform estimate and continuity property.
\\

Finally, we stress that this simple integration by parts technique has been used in used in the book
\cite[Section 9.4.5]{PeZa07} to study the stochastic convolution of
square integrable L\'evy noises. For the further study of stochastic systems forced by
stable processes, we refer to \cite{ChFu12, PeZa07, BBC04,XZ09,XZ10,PZ09,PXZ10,PSXZ11,Pr10},....
\\

{\bf Acknowledgements:} The author would like to gratefully thank Zhen-Qing Chen, Zhao Dong, 
Yong Liu, Jerzy Zabczyk, Jianliang Zhai and Xicheng Zhang for very helpful discussions. \\

\subsection{Notations and Assumptions.} Let $H$ be a separable Hilbert space. Let $A$ be a self-adjoint operator with
discrete spectrum $\{\gamma_k\}_{k \ge 0}$ satisfying
$$0<\gamma_1 \le \gamma_2 \le... \le \gamma_n \le ..., \ \ \ \ \lim_{n \rightarrow \infty} \gamma_n=\infty.$$
Assume that the eigenvectors $\{e_k\}_{k \ge 0}$ of $A$, i.e., $Ae_k=\gamma_k e_k$ $(k \ge 1)$, from a basis of $H$.
Without loss of generality, we
assume that $\|e_k\|_H=1$ $(k \ge 1)$. For all $x \in H$, it can be uniquely represented by
$$x=\sum_{k \ge 1} x_k e_k,$$
where $x_k \in \R$ for all $k \ge 1$ and $\sum_{k \ge 1} x^2_k<\infty$. \\

Given a $\sigma \in \R$, we can define $A^{\sigma}$ by
$$A^{\sigma}=\sum_{k \ge 1} \gamma^\sigma_k e_k \otimes e_k.$$
where $e_k \otimes e_k: H \rightarrow H$ is a linear operator defined by
$e_k \otimes e_k x=\Ll e_k,x\Rr_H e_k$ for all $x \in H$.

Let $L_t=\sum_{k \ge 1} \beta_k l_k(t) e_k$ be the cylindrical $\alpha$-stable processes on $H$ such that
$\{l_k(t)\}_{k\ge 1}$ are i.i.d. 1 dimensional standard symmetric $\alpha$-stable process sequence, and that
$\{\beta_k\}_{k \ge 1}$ satisfies Assumption \ref{a} below. A
one dimensional standard symmetric $\alpha$-stable process $(l(t))_{t \ge 0}$ has the following
characteristic function
\Be
\E[e^{i \lambda l(t)}]=e^{-t |\lambda|^\alpha}.
\Ee

\begin{assumption} \label{a}
$(\beta_k)_{k \ge 1}$ satisfies $\sum_{k \ge 1} |\beta_k|^\alpha<\infty$.
\end{assumption}
\begin{lem}
Under Assumption \ref{a}, for all $t \ge 0$, $L_t \in H$ a.s..
\end{lem}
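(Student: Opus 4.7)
The plan is to reduce the claim to the a.s.\ convergence of the scalar series $\sum_{k \ge 1} \beta_k^2 l_k(t)^2$: since $\{e_k\}$ is an orthonormal basis of $H$, by Parseval we have $L_t \in H$ if and only if $\sum_k \beta_k^2 l_k(t)^2 < \infty$. I would then split each summand at the threshold $|\beta_k l_k(t)|=1$, handle the large part by Borel--Cantelli and the small part by a first moment estimate combined with Tonelli.

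First I would record the standard tail bound for a symmetric $\alpha$-stable variable: the scaling $l_k(t) \stackrel{d}{=} t^{1/\alpha} l_k(1)$ and the asymptotic $f_1(x) \lesssim (1+|x|)^{-(1+\alpha)}$ of the one-dimensional density together give $P(|l_k(1)|>y) \le C_\alpha(1 \wedge y^{-\alpha})$, hence
\[
P(|\beta_k l_k(t)| > 1) \le C_\alpha\, t\, |\beta_k|^\alpha.
\]
By Assumption \ref{a} these probabilities are summable in $k$, so independence of the $l_k$'s and Borel--Cantelli yield that almost surely only finitely many indices satisfy $|\beta_k l_k(t)|>1$. Consequently,
\[
\sum_{k \ge 1} \beta_k^2 l_k(t)^2 \one_{|\beta_k l_k(t)|>1} < \infty \quad \text{a.s.}
\]

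Second, for the truncated part, I would estimate $\E[\beta_k^2 l_k(t)^2 \one_{|\beta_k l_k(t)|\le 1}] \le C_\alpha t |\beta_k|^\alpha$. Writing $f_t(x) = t^{-1/\alpha} f_1(x t^{-1/\alpha})$ and changing variables, one gets $\int_{|x|\le a} x^2 f_t(x)\, dx \le C_\alpha\, t\, a^{2-\alpha}$ for every $a>0$, which crucially uses $0<\alpha<2$ so that $x^2 f_1(x) \lesssim |x|^{1-\alpha}$ is integrable over $[0,b]$ with growth $b^{2-\alpha}$. Taking $a=1/|\beta_k|$ and multiplying by $\beta_k^2$ gives the claimed bound. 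Summing in $k$ and invoking Tonelli,
\[
\E \sum_{k \ge 1} \beta_k^2 l_k(t)^2 \one_{|\beta_k l_k(t)| \le 1} \le C_\alpha\, t \sum_{k \ge 1} |\beta_k|^\alpha < \infty,
\]
so this sum is a.s.\ finite.

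Combining the two pieces yields $\sum_k \beta_k^2 l_k(t)^2 < \infty$ a.s., i.e.\ $L_t \in H$ a.s. The only delicate step is the truncated second moment estimate; the cancellation $\beta_k^2 \cdot |\beta_k|^{-(2-\alpha)} = |\beta_k|^\alpha$ is exactly what matches the summability hypothesis of Assumption \ref{a}, and it relies crucially on $\alpha<2$.
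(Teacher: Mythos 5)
Your argument is correct, but it is a genuinely different route from the paper's: the paper disposes of this lemma in one line by citing \cite[Proposition 3.3]{PZ09}, which already gives the convergence in $H$ of the series $\sum_k \beta_k l_k(t)e_k$ under $\sum_k|\beta_k|^\alpha<\infty$. You instead give a self-contained proof: Parseval reduces the claim to $\sum_k \beta_k^2 l_k(t)^2<\infty$ a.s., and you handle this by a truncation at $|\beta_k l_k(t)|=1$, controlling the large part by the stable tail bound $\PP(|\beta_k l_k(t)|>1)\le C_\alpha t|\beta_k|^\alpha$ plus Borel--Cantelli (the first Borel--Cantelli lemma suffices, so the independence you invoke is not actually needed there), and the small part by the truncated second-moment estimate $\E\big[\beta_k^2 l_k(t)^2 \mathbf{1}_{|\beta_k l_k(t)|\le 1}\big]\le C_\alpha t|\beta_k|^\alpha$, which follows from the density bound $f_1(x)\le C(1+|x|)^{-(1+\alpha)}$, scaling, and $\alpha<2$; Tonelli then closes the argument. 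This is in effect a hands-on three-series-type argument, and the exponent bookkeeping ($\beta_k^2\cdot|\beta_k|^{\alpha-2}=|\beta_k|^\alpha$) is exactly right. What your approach buys is transparency and independence from the external reference; what the paper's citation buys is brevity, and the quoted result in \cite{PZ09} in fact gives more (the necessity of the condition as well). Either proof is acceptable for the lemma as stated.
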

\begin{proof}
It is a straightforward corollary of \cite[Proposition 3.3]{PZ09}.
\end{proof}
\ \ \

\subsection{Stochastic convolution and integration by parts technique}
\noindent Consider the following stochastic convolution:
\Be \label{e:OUAlp}
Z(t)=\int_0^t e^{-A(t-s)} d L_s=\sum_{k \in \Z_{*}} z_{k}(t) e_k
\Ee
where $$z_{k}(t)=\int_0^t e^{-\gamma_k (t-s)}
\beta_k d l_k(s).$$
It follows from Ito's product formula (\cite[Theorem 4.4.13]{Ap04}) that for all $k \ge 1$,
\Bes
l_k(t)=\int_0^t \gamma_k e^{-\gamma_k(t-s)} l_k(s)ds+\int_0^t e^{-\gamma_k(t-s)} dl_k(s)+\int_0^t \gamma_k e^{-\gamma_k(t-s)} \Delta l_k(s) ds
\Ees
where $\Delta l_k(s)=l_k(s)-l_k(s-)$. Since $l_k(t)$ is an $\alpha$-stable process, $\Delta l_k(s)=0$ for $s \in [0,t]$ a.s. and thus
\Bes
\int_0^t \gamma_k e^{-\gamma_k(t-s)} \Delta l_k(s) ds=0.
\Ees
Therefore,
\Be \label{e:zkIBP}
\begin{split}
z_k(t)&=\beta_k l_k(t)-\int_0^t \gamma_k e^{-\gamma_k(t-s)} \beta_k l_k(s)ds
\end{split}
\Ee
Hence, we get
\Be \label{e:IntByPar}
\begin{split}
Z(t)=L_t-Y(t),
\end{split}
\Ee
where
$$Y(t):=\int_0^t A e^{-A(t-s)} L_s ds \in H.$$
Since $Z(t)$ and $L_t$ are both in $H$ a.s. for all $t \ge 0$,
we have
$$Y(t) \in H \ \ a.s. \ \ \ \ (t \ge 0).$$
We can further show that $(Y(t))_{t \ge 0}$ is continuous in $H$ a.s. in
Lemma \ref{l:ConY} below.

\section{Main results: two applications of \eqref{e:IntByPar}}
The main results of this paper are the two theorems below. These two theorems are new and
are important for studying the ergodicity and regularity of SPDEs (\cite{DoXuZh12})

\subsection{Application 1} The first application is a uniform estimate of $Z(t)$, which is similar to
that of stochastic convolution of Wiener noises got by Da Prato-Kwapie\'n-Zabczyk's factorization.
This type of estimate is often used to study ergodicity and regularity of stochastic systems (\cite{DoXuZh12, DPKZ87}).
\begin{thm} \label{t:ZEst}
Let $\alpha>1$. Further assume that there exists some $\theta>0$ so that
\Be \label{a2}
\sum_{k \ge 1} |\beta_k|^\alpha \gamma_k^{\alpha \theta}<\infty.
\Ee
Then, for all $0<p<\alpha$, $\tl \theta \in [0,\theta)$ and $T>0$, we have
\Be
\E \sup_{0 \leq t \le T}\|A^{\tl \theta} Z(t)\|^p_{H} \le C T^{p/\alpha} \vee T^{(1/\alpha+\theta-\tl \theta)p} \vee T^{(1+1/\alpha)p}
\Ee
where $C$ depends on $\alpha, \beta, \tl \theta$ and $p$.
\end{thm}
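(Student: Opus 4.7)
My plan is to apply $A^{\tl\theta}$ to the integration-by-parts identity \eqref{e:IntByPar} to obtain $A^{\tl\theta}Z(t)=A^{\tl\theta}L_t-A^{\tl\theta}Y(t)$, and then bound the two pieces separately; the inequality $(a+b)^p\le 2^{p}(a^p+b^p)$ (Minkowski when $p\ge 1$, subadditivity when $p<1$) reduces the theorem to controlling $\E\sup_t\|A^{\tl\theta}L_t\|_H^p$ and $\E\sup_t\|A^{\tl\theta}Y(t)\|_H^p$ independently. Both quantities are instances of a single moment estimate for cylindrical $\alpha$-stable processes on $H$, applied once at exponent $\tl\theta$ and once at exponent $\theta$.

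For the $L$-term, notice that $A^{\tl\theta}L_t=\sum_k \gamma_k^{\tl\theta}\beta_k l_k(t)e_k$ is itself a cylindrical $\alpha$-stable process whose coefficients $\gamma_k^{\tl\theta}\beta_k$ satisfy Assumption \ref{a}: indeed $\sum_k\gamma_k^{\alpha\tl\theta}|\beta_k|^\alpha\le \sum_k(1+\gamma_k^{\alpha\theta})|\beta_k|^\alpha<\infty$ by \eqref{a2}. Self-similarity of stable processes gives $A^{\tl\theta}L_t\stackrel{d}{=}t^{1/\alpha}A^{\tl\theta}L_1$, so $\E\|A^{\tl\theta}L_t\|_H^p=t^{p/\alpha}\,\E\|A^{\tl\theta}L_1\|_H^p$, and the latter is finite for every $p<\alpha$ by the $t^{-\alpha}$ tail estimate for symmetric $\alpha$-stable vectors in Hilbert space. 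Since $\alpha>1$, the interval $(\max(1,p),\alpha)$ is non-empty; picking $q$ in it, Doob's $L^q$ maximal inequality applied to the submartingale $\|A^{\tl\theta}L_t\|_H$, followed by Jensen's inequality to descend from $q$ to $p$, yields $\E\sup_{t\le T}\|A^{\tl\theta}L_t\|_H^p\le CT^{p/\alpha}$.

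For the $Y$-term, spectral calculus gives
\Bes
A^{\tl\theta}Y(t)=\int_0^t A^{1+\tl\theta-\theta}e^{-A(t-s)}\,A^\theta L_s\,ds,
\Ees
and an elementary maximization of $x^{1+\tl\theta-\theta}e^{-x(t-s)}$ shows that the norm of the operator $A^{1+\tl\theta-\theta}e^{-A(t-s)}$ is bounded by $C(t-s)^{\theta-1-\tl\theta}$ when $1+\tl\theta-\theta>0$ (this is integrable in $s$ because $\tl\theta<\theta$) and by $\gamma_1^{1+\tl\theta-\theta}$ otherwise. Pulling $\sup_{s\le T}\|A^\theta L_s\|_H$ out of the integral and integrating the kernel in $s$ yields $\sup_{t\le T}\|A^{\tl\theta}Y(t)\|_H\le C(T^{\theta-\tl\theta}\vee T)\sup_{s\le T}\|A^\theta L_s\|_H$. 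The previous paragraph, applied with $\tl\theta$ replaced by $\theta$, controls the latter sup in $L^p$ by $CT^{p/\alpha}$, giving $\E\sup_t\|A^{\tl\theta}Y(t)\|_H^p\le C(T^{(1/\alpha+\theta-\tl\theta)p}\vee T^{(1+1/\alpha)p})$. Adding the $L$-term bound produces the three-term maximum claimed.

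The main obstacle I expect is establishing the moment bound $\E\|A^{\tl\theta}L_1\|_H^p<\infty$ for $p<\alpha$. The direct route via $(\sum_k\gamma_k^{2\tl\theta}\beta_k^2 l_k(1)^2)^{p/2}\le\sum_k \gamma_k^{p\tl\theta}|\beta_k|^p |l_k(1)|^p$ (valid for $p\le 2$) fails because $\sum_k\gamma_k^{p\tl\theta}|\beta_k|^p$ need not converge from Assumption \ref{a} and \eqref{a2} alone when $p<\alpha$, so one must instead exploit the $\alpha$-homogeneity of the Hilbert-space-valued stable law, equivalently the heavy-tail estimate $\PP(\|A^{\tl\theta}L_1\|_H>t)\sim c t^{-\alpha}$. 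A secondary point is that Doob's inequality requires $q>1$, which is precisely why the standing hypothesis $\alpha>1$ is used: it guarantees that the admissible interval $(\max(1,p),\alpha)$ is non-empty so that the passage from $L^q$ down to $L^p$ via Jensen's inequality is legitimate.
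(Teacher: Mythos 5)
Your proposal is correct, and its skeleton coincides with the paper's: split $A^{\tl\theta}Z(t)=A^{\tl\theta}L_t-A^{\tl\theta}Y(t)$ via \eqref{e:IntByPar}, control the first term by Doob's maximal inequality, and control $Y$ by factoring out $\sup_{s\le T}\|A^\theta L_s\|_H$ against the smoothing estimate $\|A^{1+\tl\theta-\theta}e^{-At}\|\le Ct^{-(1+\tl\theta-\theta)}$, which yields exactly the three powers $T^{p/\alpha}$, $T^{(1/\alpha+\theta-\tl\theta)p}$, $T^{(1+1/\alpha)p}$. The one place where you genuinely diverge is the key one-time moment bound $\E\|A^{\tl\theta}L_t\|_H^p\le Ct^{p/\alpha}$ (the paper's Step 1): the paper proves it by hand, using a Rademacher sequence and the Khintchine inequality to reduce $\E\|A^{\tl\theta}L_t\|_H^p$ to the $p$-th moment of a one-dimensional symmetric $\alpha$-stable variable with scale $\big(\sum_k|\beta_k|^\alpha\gamma_k^{\alpha\tl\theta}t\big)^{1/\alpha}$, which gives the explicit constant $C(\alpha,p)\big(\sum_k|\beta_k|^\alpha\gamma_k^{\alpha\tl\theta}\big)^{p/\alpha}$; you instead invoke self-similarity, $A^{\tl\theta}L_t\stackrel{d}{=}t^{1/\alpha}A^{\tl\theta}L_1$, together with the classical fact that a symmetric $\alpha$-stable random vector in a separable Hilbert (or Banach) space has a $t^{-\alpha}$ tail and hence finite moments of every order $p<\alpha$. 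Your route is legitimate (and you rightly note that the naive term-by-term bound fails), but it outsources the hard step to a nontrivial external theorem and gives a less explicit constant; the paper's computation is self-contained and quantitative, which matters if one wants to track the dependence on $\sum_k|\beta_k|^\alpha\gamma_k^{\alpha\tl\theta}$. Two further small differences are harmless: you handle $0<p\le 1$ by applying Doob at an intermediate exponent $q\in(1,\alpha)$ and descending by Jensen, while the paper reduces to $p\in(1,\alpha)$ by H\"older; and you explicitly treat the case $1+\tl\theta-\theta\le 0$ of the operator bound (norm $\le\gamma_1^{1+\tl\theta-\theta}$), a case the paper passes over silently. For the submartingale/Doob step one should also note, as the paper implicitly does via the c\`adl\`ag modification of $L_t$, that a right-continuous version is needed to take the supremum over continuous time; this is supplied by the summability hypothesis and is not a gap in either argument.
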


\begin{proof}
We only need to show the inequality for the case $p \in (1,\alpha)$ since the
case of $0<p \le 1$ is an immediate corollary by H$\ddot{o}$lder's inequality.
\\

\emph{Step 1.} We claim that for all $p \in (1,\alpha)$ and all $\tl \theta \in [0,\theta]$,
\Be \label{e:EALT}
\E\|A^{\tl \theta} L_t\|_H^p \le C(\alpha, p) \left(\sum_{k \ge 1} |\beta_k|^\alpha \gamma_k^{\alpha \tl \theta}\rt)^{p/\alpha} t^{p/\alpha} \ \ \ t>0.
\Ee
 To show \eqref{e:EALT}, we only show the inequality for the case $\tl \theta=\theta$ since
the other cases are by the same arguments. \\

 We follow the argument in the proof of \cite[Theorem 4.4]{PZ09}. Take a Rademacher
sequence $\{r_k\}_{k \geq 1}$ in a new probability space
$(\Omega^{'},\mcl F^{'},\PP^{'})$, i.e. $\{r_k\}_{k \geq 1}$ are
i.i.d. with $\PP\{r_k=1\}=\PP\{r_k=-1\}=\frac 12$. By the following
Khintchine inequality: for any $p>0$, there exists some $C(p)>0$
such that for arbitrary real sequence $\{h_k\}_{k \geq 1}$,
$$\left(\sum_{k \geq 1} h^2_k\right)^{1/2} \leq C(p) \left(\E^{'} \left|\sum_{k \geq 1} r_k h_k\right|^p\right)^{1/p}.$$
By this inequality, we get
\begin{\eqn} \label{e:EZAtp}
\begin{split}
\E\|A^{\theta} L_t\|^p_H&=\E \left(\sum_{k \ge 1} \gamma_k^{2\theta}
|\beta_k|^2|l_k(t)|^2\right)^{p/2} \leq C \E \E^{'}\left|\sum_{k \ge 1} r_k \gamma_k^{\theta}
|\beta_k| l_k(t)\right|^p \\
&=C\E^{'}\E\left|\sum_{k \ge 1} r_k \gamma_k^{\theta}
|\beta_k| l_k(t)\right|^p
\end{split}
\end{\eqn}
where $C=C^p(p)$. For any $\lambda \in \R$, by the fact of $|r_k|=1$
and formula (4.7) of \cite{PZ09}, one has
\begin{\eqn*}
\begin{split}
\E\exp \left\{i \lambda \sum_{k \ge 1} r_k \gamma_k^{\theta}
|\beta_k| l_k(t)\right\}&=\exp\left\{-|\lambda|^\alpha \sum_{k \ge 1}
|\beta_k|^\alpha \gamma_k^{\alpha \theta}t
 \right\}
\end{split}
\end{\eqn*}
\vskip 3mm
Now we use (3.2) in \cite{PZ09}: if $X$ is a symmetric random
variable satisfying $$\E \left[e^{i\lambda
X}\right]=e^{-\sigma^{\alpha} |\lambda|^\alpha}$$ for some $\alpha
\in (0,2)$ and any $\lambda \in \R$, then for all $p \in (0,\alpha)$,
$$\E|X|^p=C(\alpha,p)
\sigma^p. $$
Since $\sum_{k \ge 1}
|\beta_k|^\alpha \gamma_k^{\alpha \theta}<\infty$, \eqref{e:EALT} holds. \\

\emph{Step 2}.
Thanks to Step 1, it is easy to get that $A^{\tl \theta} L_t$ ($\tl \theta \in [0,\theta]$) is an $L^p$ martingale.
By Doob's martingale inequality and \eqref{e:EALT},
\Be \label{e:DooIne}
\E\lt[\sup_{0 \leq t \le T} \lf \|A^{\tl \theta} L_t\rg \|^p_H \rt] \le \lt(\frac p{p-1}\rt)^p \E\lt[\lf \|A^{\tl \theta} L_T\rg \|^p_H \rt]
\le C T^{p/\alpha},
\Ee
where $C$ depends on $\tl \theta, \alpha, \beta$ and $p$.
Recall
$$Y(t)=\int_0^t A e^{-A(t-s)} L_s ds,$$
observe that
\Bes
\begin{split}
\sup_{0 \le t \le T}\|A^{\tl\theta} Y(t)\|_H  & \le \sup_{0 \le t \le T} \int_0^t \|A^{1+\tl \theta-\theta} e^{-A(t-s)}\|\|A^{\theta} L_s\|_H ds  \\
& \le \sup_{0 \le t \le T}  \|A^{\theta} L_t\|_H \sup_{0 \le t \le T} \int_0^t \|A^{1+\tl \theta-\theta} e^{-A(t-s)}\| ds
\end{split}
\Ees
By the classical estimate
\Be \label{e:EAtEst}
\|A^{\gamma} e^{-At}\| \le Ct^{-\gamma} \ \ \ \ \ \gamma \ge 0,
\Ee
we get
\Be
\sup_{0 \le t \le T}\|A^{\tl\theta} Y(t)\|_H \le C \sup_{0 \le t \le T}  \|A^{\theta} L_t\|_H (T^{\theta-\tl \theta} \vee T),
\Ee
which, together with Doob's inequality and \eqref{e:EALT}, implies
\Be \label{e:IEst}
\E \sup_{0 \le t \le T}\|A^{\tl\theta} Y(t)\|^p_H \le C T^{(1/\alpha+\theta-\tl \theta)p} \vee T^{(1+1/\alpha)p},
\Ee
where $C$ depends on $\alpha, \beta, \theta, \tl \theta$ and $p$. \\

Combining the above inequality with \eqref{e:DooIne} and \eqref{e:IntByPar}, we immediately get the desired inequality.
\end{proof}
\ \ \
\subsection{Application 2}
Theorem \ref{l:Acc} below is an application of Theorem \ref{t:ZEst}, it of course has its own interest and can be applied to prove
ergodicity (\cite{DoXuZh12}). If $L_t$ is cylindrical Wiener noises,
\eqref{e:Acc} is well known. If $L_t$ is finite dimension $\alpha$-stable noises,
thanks to \cite[Proposition 3, Chapter VIII]{Ber96} and an argument for $I_2$ in the proof below,
we can also obtain \eqref{e:Acc}. However, as $L_t$ is infinite dimensional, Theorem \ref{t:ZEst} is crucial for
passing the limit from the finite to infinite dimensions.

\begin{thm} \label{l:Acc}
Assume that the conditions in Theorem \ref{t:ZEst} hold.
Let $T>0$ and $\e>0$ be arbitrary. For all $\tl \theta \in [0, \theta)$,  we have
\Be \label{e:Acc}
\PP(\sup_{0 \le t\le T} \|A^{\tl \theta} Z(t)\|_H \le \e)>0.
\Ee
\end{thm}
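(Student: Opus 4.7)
The plan is to reduce the infinite-dimensional small-ball event to a finite-dimensional one by truncating $Z$ at a high-enough mode, and then handle the finite-dimensional piece mode-by-mode, using the identity \eqref{e:zkIBP} together with a classical small-ball estimate for one-dimensional stable processes.

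Write $Z(t)=Z^{(N)}(t)+R^{(N)}(t)$ where $Z^{(N)}(t):=\sum_{k=1}^{N}z_k(t)e_k$ and $R^{(N)}(t):=\sum_{k>N}z_k(t)e_k$. The tail $R^{(N)}$ is itself the stochastic convolution driven by the cylindrical $\alpha$-stable process $\sum_{k>N}\beta_k l_k(t)e_k$, whose coefficients satisfy $\sum_{k>N}|\beta_k|^\alpha\gamma_k^{\alpha\theta}\to 0$ as $N\to\infty$. Inspecting the proof of Theorem \ref{t:ZEst}, the constant $C$ scales with $(\sum_{k\ge 1}|\beta_k|^\alpha\gamma_k^{\alpha\theta})^{p/\alpha}$, so applying the theorem to the tail noise yields $\E\sup_{0\le t\le T}\|A^{\tl\theta}R^{(N)}(t)\|_H^p\to 0$. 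Markov's inequality then lets us choose $N_0$ with $\PP(\sup_{0\le t\le T}\|A^{\tl\theta}R^{(N_0)}(t)\|_H>\e/2)<1/2$. Since $Z^{(N_0)}$ and $R^{(N_0)}$ depend on disjoint, independent subfamilies of $\{l_k\}_{k\ge 1}$, they are independent in $H$; combining this with orthogonality of the modes in $H$, it suffices to prove
$$\PP\lt(\sup_{0\le t\le T}\|A^{\tl\theta}Z^{(N_0)}(t)\|_H\le \e/2\rt)>0.$$

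For the finite-dimensional piece, $\|A^{\tl\theta}Z^{(N_0)}(t)\|_H^2=\sum_{k=1}^{N_0}\gamma_k^{2\tl\theta}|z_k(t)|^2$, so using independence of $\{l_k\}_{k=1}^{N_0}$ it is enough to show, for each fixed $k$ and every $\delta>0$, that $\PP(\sup_{0\le t\le T}|z_k(t)|<\delta)>0$. This is where \eqref{e:zkIBP} enters: if $\sup_{0\le t\le T}|\beta_kl_k(t)|\le\delta/2$, then
$$\lt|\int_0^t\gamma_ke^{-\gamma_k(t-s)}\beta_kl_k(s)\,ds\rt|\le \sup_{0\le s\le T}|\beta_kl_k(s)|\,(1-e^{-\gamma_kt})\le \delta/2,$$
so $|z_k(t)|\le\delta$ throughout $[0,T]$. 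The desired positivity now reduces to $\PP(\sup_{0\le t\le T}|l_k(t)|<\delta/(2|\beta_k|))>0$, which is the small-ball estimate of \cite[Proposition 3, Chapter VIII]{Ber96} for one-dimensional symmetric $\alpha$-stable processes.

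The main obstacle is conceptual rather than technical: the 1D small-ball estimate does not extend to infinite dimensions on its own, because a countable product of positive probabilities can vanish. Theorem \ref{t:ZEst} supplies precisely the uniform quantitative control of the tail modes that legitimizes the truncation, while the identity \eqref{e:zkIBP} turns the finite-dimensional mode-by-mode step into a routine reduction to the driving noise $l_k$.
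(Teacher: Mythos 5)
Your proof is correct and follows essentially the same route as the paper: split off the high modes, control them with Theorem \ref{t:ZEst} plus a Chebyshev/Markov bound, and handle the finitely many low modes one at a time via \eqref{e:zkIBP} and Bertoin's small-ball estimate with scaling. The only cosmetic difference is how the tail is made small: the paper keeps the full convolution at regularity level $\theta$ and gains the vanishing factor $\gamma_N^{-(\theta-\tilde\theta)}$ from the spectral gap, whereas you apply the theorem to the truncated noise and use that its constant scales with the vanishing tail sum $\sum_{k>N}|\beta_k|^\alpha\gamma_k^{\alpha\theta}$ --- both are valid.
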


\begin{proof}
Since $\{z_k(t)\}_{k \ge 0}$ are independent sequence, we have
\Bes
\begin{split}
\PP\left(\sup_{0 \le t \le T} \|A^{\tl \theta}Z(t)\|_H \le \e \right) &=
\PP \left(\sup_{0 \le t\le T} \sum_{k \ge 1}\gamma_k^{2\tl \theta} |z_k(t)|^2 \le \e^2\right) \ge I_1I_2
\end{split}
\Ees
where
$$I_1:=\PP \lt(\sup_{0 \le t\le T} \sum_{k>N} \gamma_k^{2\tl \theta} |z_k(t)|^2 \le \e^2/2\rt),$$
$$I_2:=\PP \lt(\sup_{0 \le t\le T} \sum_{k \le N} \gamma_k^{2\tl \theta} |z_k(t)|^2 \le \e^2/2\rt),$$
with $N\in \N$ being some fixed large number.
By the spectral property of $A$, we have
\Bes
\begin{split}
I_1 & \ge \PP \lt(\sup_{0 \le t\le T} \sum_{k>N}\gamma_k^{2\tl \theta} |z_k(t)|^2 \le \e^2/2\rt) \\
& \ge \PP \lt(\sup_{0 \le t\le T} \|A^\theta Z(t)\|^2_H \le \gamma_N^{2(\theta-\tl \theta)} \e^2/2\rt) \\
&=1-\PP \lt(\sup_{0 \le t\le T} \|A^\theta Z(t)\|^2_H>\gamma_N^{2(\theta-\tl \theta)} \e^2/2\rt) \\
&=1-\PP \lt(\sup_{0 \le t\le T} \|A^\theta Z(t)\|^p_H>\gamma_N^{p(\theta-\tl \theta)} \e^p/2^{p/2}\rt)
\end{split}
\Ees
This, together with Theorem \ref{t:ZEst} and Chebyshev inequality, implies
\Bes
I_1 \ge 1-C\gamma_N^{-(\theta-\tl \theta)p} \e^{-p},
\Ees
where $p \in (1, \alpha)$ and $C$ depends on $p, \alpha, \beta, T$. As $\gamma_N$ is sufficient large,
$$I_1>0.$$
\ \ \

To finish the proof, it suffices to show that
\Be \label{e:IrrI2>0}
I_2>0.
\Ee
Define $A_k:=\{\sup_{0 \le t\le T} |z_k(t)| \le \e/(\sqrt {2N} \gamma^{\tl \theta}_k)\}$,
it is easy to have
\Be \label{e:IrrI2Cal}
I_2  \ge \PP\lt(\bigcap_{|k| \le N} A_k\rt)=\prod_{|k| \le N} \PP(A_k).
\Ee
Recalling \eqref{e:zkIBP}, we have
\Bes
z_k(t)=\beta_k l_k(t)-\int_0^t \gamma_k e^{-\gamma_k (t-s)} \beta_k l_k(s) ds.
\Ees
Furthermore, it follows from a straightforward calculation that
\Bes
\sup_{0 \le t \le T} |\int_0^t \gamma_k e^{-\gamma_k(t-s)} \beta_k l_k(s) ds| \le |\beta_k| \sup_{0 \le t \le T} |l_k(t)|
\ \ \ \ \ \ \ k \ge 1.
\Ees
Therefore,
\Bes
\PP(A_k) \ge \PP \lt(\sup_{0 \le t\le T} |l_k(t)| \le \frac{\e}{2 |\beta_k| \sqrt{2N} \gamma^{\tl \theta}_k}\rt)
\Ees
By \cite[Proposition 3, Chapter VIII]{Ber96}, there exist some
$c, C>0$ only depending on $\alpha$ so that
\Bes
\PP(\sup_{0 \le t \le T} |l(t)| \le 1) \ge C e^{-ct}.
\Ees
This, together with the scaling property of stable process, implies
\Be
\PP(A_k)>0 \ \ \ \ |k| \le N,
\Ee
which, combining with \eqref{e:IrrI2Cal}, immediately implies \eqref{e:IrrI2>0}.
\end{proof}
\ \ \ \

\section{Some further applications}
In this section, let us give other three applications of \eqref{e:IntByPar}.
The results in Applications 3 and 5 are known, it seems that we give new and more
illustrative proofs by
our simple technique. The result in Application 4 is also new and from \cite{DoXuZh12}.

\subsection{Application 3}
The third application is to determine the trajectory property
of $(Z(t))_{t \ge 0}$. The theorem implies the results in
\cite{LiuZha12}. \\

For a stochastic process $(X_t)_{t \ge 0}$ valued in some Banach space, it is said to be C$\grave{a}$dl$\grave{a}$g
if it has left limit and is right continuous almost surely.
\begin{lem} \label{l:ConY}
Let $\theta \in \R$ and let the following assumption hold:
\Bes
\sum_{k \ge 1} |\beta_k|^\alpha \gamma_k^{\alpha \theta}<\infty.
\Ees
Then, $(Y(t))_{t \ge 0}$ is continuous in $\mcl D(A^\theta)$ a.s..
\end{lem}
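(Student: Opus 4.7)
\emph{Proof plan.} My strategy is to diagonalise $A^\theta Y(t)$ in the eigenbasis of $A$ and to establish continuity via a pathwise dominated convergence argument over the coordinate index. Writing $A^\theta Y(t) = \sum_{k \ge 1} \gamma_k^\theta y_k(t)\, e_k$ with
$$y_k(t) = \beta_k \int_0^t \gamma_k e^{-\gamma_k(t-s)} l_k(s)\, ds,$$
each sample path $l_k(\cdot,\omega)$ is a.s.\ c\`adl\`ag and hence bounded on $[0,T]$, so $t \mapsto y_k(t)$ is continuous by standard continuity of Lebesgue integrals of bounded functions against smooth kernels. Setting $M_k := \sup_{0 \le s \le T} |l_k(s)|$, the elementary bound $|y_k(t)| \le |\beta_k|(1 - e^{-\gamma_k t}) M_k \le |\beta_k| M_k$ holds uniformly in $t \in [0,T]$.

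The main step, and the principal obstacle, is the almost sure summability
$$\sum_{k \ge 1} \gamma_k^{2\theta} \beta_k^2 M_k^2 < \infty,$$
which cannot be obtained by a direct moment calculation, since $\E M_k^2 = +\infty$ (stable processes have finite moments only of order $p < \alpha < 2$). I would apply Kolmogorov's two-series criterion to the independent non-negative summands $X_k := \gamma_k^{2\theta} \beta_k^2 M_k^2$. Using the stable scaling $M_k \stackrel{d}{=} T^{1/\alpha} \sup_{0 \le s \le 1} |l(s)|$ and the one-dimensional tail $\PP(\sup_{0\le s \le 1} |l(s)| > x) \asymp x^{-\alpha}$ for large $x$, one obtains
$$\sum_{k \ge 1} \PP(X_k > 1) \asymp \sum_{k \ge 1} \gamma_k^{\alpha\theta} |\beta_k|^\alpha < \infty,$$
and a parallel computation with $\E\bigl[M_k^2 \one_{M_k \le y_k}\bigr] \le C\, y_k^{2-\alpha}$ at $y_k := 1/(\gamma_k^\theta |\beta_k|)$ yields $\sum_k \E[X_k \one_{X_k \le 1}] \asymp \sum_k \gamma_k^{\alpha\theta} |\beta_k|^\alpha < \infty$; both conditions of the two-series theorem are verified.

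The proof then concludes by dominated convergence over the index $k$. On the set of full probability where the summability above holds, for each fixed $t \in [0,T]$ and $h \to 0$, the pointwise continuity of each $y_k$ gives $(y_k(t+h) - y_k(t))^2 \to 0$, while the uniform dominant $\gamma_k^{2\theta}(y_k(t+h) - y_k(t))^2 \le 4 \gamma_k^{2\theta} \beta_k^2 M_k^2$ is a.s.\ summable. Therefore
$$\|A^\theta Y(t+h) - A^\theta Y(t)\|_H^2 = \sum_{k \ge 1} \gamma_k^{2\theta}\bigl(y_k(t+h) - y_k(t)\bigr)^2 \longrightarrow 0,$$
establishing the a.s.\ continuity of $(Y(t))_{t \ge 0}$ in $\mathcal{D}(A^\theta)$.
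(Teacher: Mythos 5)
Your proof is correct, but it follows a genuinely different route from the paper's. The paper argues softly through the literature: it cites \cite[Theorem 4.13]{PeZa07} for a.s.\ uniform convergence of the series defining $L_t$, \cite[Theorem 9.3]{PeZa07} for a c\`adl\`ag modification of $Z$, hence a c\`adl\`ag version of $Y=L-Z$, and then invokes \cite[Lemma 3.1]{LiuZha12} to get uniform convergence of the tail $Y^n(t)=\sum_{k\ge n}y_k(t)e_k$ on $[0,T]$, so that $Y$ is continuous as a locally uniform limit of the continuous finite-dimensional sums $Y_n$. You instead give a self-contained quantitative argument: coordinatewise continuity of $y_k$, the envelope $\sup_{t\le T}\gamma_k^{2\theta}y_k(t)^2\le \gamma_k^{2\theta}\beta_k^2 M_k^2$ with $M_k=\sup_{s\le T}|l_k(s)|$, and a.s.\ summability of this envelope via the stable scaling, the tail bound $\PP(\sup_{s\le 1}|l(s)|>x)\asymp x^{-\alpha}$ (L\'evy's maximal inequality), Borel--Cantelli for $\sum_k\PP(X_k>1)$, and the truncated first moments $\E[M_k^2\one_{M_k\le y_k}]\le C\,y_k^{2-\alpha}$ --- both series reducing exactly to the hypothesis $\sum_k|\beta_k|^\alpha\gamma_k^{\alpha\theta}<\infty$. (A small naming point: for nonnegative independent summands what you use is really Borel--Cantelli plus finiteness of the truncated expectations, i.e.\ the easy half of the three-series theorem, rather than the two-series theorem; the two conditions you verify do suffice.) Your dominated-convergence step is fine, and in fact your summable envelope gives uniform convergence on $[0,T]$ by the Weierstrass test, recovering the paper's locally-uniform-limit picture. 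What each approach buys: the paper's proof is short given the cited results and meshes with the c\`adl\`ag machinery used elsewhere in the paper, while yours avoids the external references entirely and makes the dependence on the summability assumption explicit and quantitative.
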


\begin{proof}
It is easy to check that under the condition in the lemma, $Z_t$ and
$L_t$ are both in $\mcl D(A^\theta)$ for all $t \ge 0$. Thanks to \eqref{e:IntByPar},
$Y(t) \in \mcl D(A^\theta)$ for all $t \ge 0$. We only prove the lemma for the case of $\theta=0$
since the other cases are by the same arguments.  \\

By \cite[Theorem 4.13]{PeZa07}, for all $T>0$,
$L_t$ uniformly converges in $H$ on $[0,T]$ a.s.. This, together with \cite[Lemma 3.1]{LiuZha12},
implies that $L_t$ has a C$\grave{a}$dl$\grave{a}$g version on $[0,T]$. Since $T>0$ is arbitrary,
$(L_t)_{t\ge 0}$ has a C$\grave{a}$dl$\grave{a}$g version.
By \cite[Theorem 9.3]{PeZa07}, $(Z(t))_{t \ge 0}$ has a C$\grave{a}$dl$\grave{a}$g modification.
Hence, $(Y(t))_{t \ge 0}$ also has a C$\grave{a}$dl$\grave{a}$g version.  \\

Write $Y(t)=\sum_{k=1}^\infty y_k(t) e_k$ with
$$y_k(t):=\int_0^t \gamma_k e^{-\gamma_k(t-s)} \beta_k l_k(s)ds,$$
further denote
$$Y_{n}(t):=\sum_{k<n} y_k(t) e_k, \ \ \ Y^n(t):=\sum_{k \ge n} y_k(t) e_k,  \ \ \ \ \ (n \in \N).$$
Since $(Y(t))_{0 \le t \le T}$ also has a C$\grave{a}$dl$\grave{a}$g version for all
$T>0$, by \cite[Lemma 3.1]{LiuZha12}, we get
\Bes
\lim_{n \rightarrow \infty} \sup_{0 \le t \le T} \|Y^n(t)\|^2_H=0 \ \ \ with \ probability \ 1.
\Ees
On the other hand, $(Y_n(t))_{t \ge 0}$ is continuous for all $n \ge 0$. By \cite[Lemma 3.1]{LiuZha12}
again, $(Y(t))_{0 \le t\le T}$ is continuous on $H$. Since $T>0$ is arbitrary, $(Y(t))_{t\ge 0}$ is continuous on $H$.
\end{proof}

\begin{thm}
Under the same condition as in Lemma \ref{l:ConY},
$(Z(t))_{t \ge 0}$ has a C$\grave{a}$dl$\grave{a}$g version
in $\mcl D(A^{\theta})$ iff $(L_t)_{t\ge 0}$ has
a C$\grave{a}$dl$\grave{a}$g version in $\mcl D(A^{\theta})$.
\end{thm}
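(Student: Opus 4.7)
The plan is to read the equivalence straight off the decomposition \eqref{e:IntByPar} together with Lemma \ref{l:ConY}. Recall that this decomposition gives
\[
Z(t) = L_t - Y(t),
\]
and Lemma \ref{l:ConY}, under the standing hypothesis $\sum_{k\ge 1} |\beta_k|^\alpha \gamma_k^{\alpha\theta} < \infty$, already tells us that $(Y(t))_{t\ge 0}$ admits a version that is continuous in $\mathcal{D}(A^\theta)$ almost surely. Since continuity in $\mathcal{D}(A^\theta)$ trivially implies the càdlàg property in $\mathcal{D}(A^\theta)$, the theorem reduces to the observation that càdlàgness is preserved under addition/subtraction of a continuous process.

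More concretely, for the ``if'' direction, suppose $(L_t)_{t\ge 0}$ has a càdlàg version in $\mathcal{D}(A^\theta)$. Fix the version of $Y$ given by Lemma \ref{l:ConY}. Then
\[
Z(t) = L_t - Y(t)
\]
is, pointwise in $\omega$, a sum of a càdlàg path and a continuous path in $\mathcal{D}(A^\theta)$; left limits and right continuity are additive, so $Z$ is càdlàg in $\mathcal{D}(A^\theta)$. The ``only if'' direction is symmetric: writing $L_t = Z(t) + Y(t)$ expresses $L$ as the sum of a (hypothetical) càdlàg $Z$ and the continuous $Y$, so $L$ inherits the càdlàg property in $\mathcal{D}(A^\theta)$.

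The only mild subtlety is that one is juggling modifications: the equality \eqref{e:IntByPar} holds pathwise by construction, but one must pick a common null set outside of which (i) the continuous version of $Y$ from Lemma \ref{l:ConY} is defined, and (ii) the assumed càdlàg modification of $L$ (resp.\ $Z$) coincides with the original process $\mathbb{P}$-almost surely. This is routine: taking the intersection of the two full-measure events on which each version agrees with the original, equation \eqref{e:IntByPar} then transfers the càdlàg property from one side to the other. I expect no real obstacle beyond keeping track of these versions; the substance of the theorem is entirely contained in Lemma \ref{l:ConY} and in the splitting \eqref{e:IntByPar}.
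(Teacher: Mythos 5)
Your proof is correct and follows exactly the paper's own argument: apply the decomposition \eqref{e:IntByPar}, invoke Lemma \ref{l:ConY} for the continuity of $Y$ in $\mcl D(A^{\theta})$, and transfer the C\`adl\`ag property between $Z$ and $L$ in both directions. Your extra remark on keeping track of versions is a harmless refinement of what the paper leaves implicit.
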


\begin{proof}
Using the integration by formula \eqref{e:IntByPar}, we have
$$Z(t)=L_t-Y(t).$$
Since $(Y(t))_{t \ge 0}$ is continuous in $\mcl D(A^{\theta})$,
 $(Z(t))_{t \ge 0}$ has a C$\grave{a}$dl$\grave{a}$g version iff $(L_t)_{t\ge 0}$ has
a C$\grave{a}$dl$\grave{a}$g version.
\end{proof}
\begin{rem}
Using above theorem, we can easily recover the results in \cite{LiuZha12}, but our proof seems much more illustrative.
\end{rem}
\ \ \

\subsection{Application 4} The fourth application of Theorem \ref{t:ZEst} is to determine the trajectory
property of stochastic Burgers equations:
\Be \label{e1.1}
d X(t)-\nu \p^2_\xi X(t)dt+X(t) \p_\xi X(t)dt=dL_t, \ \ \ X(0)=x.
\Ee
\ \ \ \

Let $\T= \R/(2\pi\Z)$ be equipped with the usual Riemannian metric, and let $d \xi$
denote the Lebesgue measure on $\T$. Then
$$H:=\bigg\{x\in L^2(\T, \R): \int_\T x(\xi) d\xi =0\bigg\}$$
is a separable real Hilbert space with  inner product and norm
$$\Ll x,y \Rr_H:=\int_\T x(\xi)y(\xi) d\xi,\ \ \|x\|_H:=\Ll x,x \Rr_H^{1/2}.$$
\ \ \ \

For $x\in C^2(\T)$, the Laplacian operator $\Delta$ is given by $\Delta x= x''.$
Let $(A, \mcl D(A))$ be the closure of $(-\Delta, C^2(\T)\cap H)$ in $H$,
which is a positively definite self-adjoint operator on $H$.
Denote $\Z_*:=\Z \setminus \{0\}$. $\{e_k\}_{k \in \Z_*}$ with $e_k=\frac{1}{\sqrt{2 \pi}} e^{ik\xi}$
an orthonormal basis of $H$. It is easy to see that
$$A e_k=|k|^2 e_k.$$
\ \ \ \

Assume that $L_t=\sum_{k \in \Z_*} \beta_k l_k(t)$ is the cylindrical $\alpha$-stable processes on $H$ with
$\{l_k(t)\}_{k\in \Z_*}$ being i.i.d. standard 1 dimensional $\alpha$-stable process sequence. Moreover, there are some
constants $C_1, C_2>0$ so that
$C_1 |k|^{-2\beta} \le |\beta_k| \leq C_2
|k|^{-2\beta}$ with $\beta>1+\frac 1{2\alpha}$.
\\

Under the above setting, Theorems \ref{t:ZEst} and \ref{l:Acc} reads as
the following two theorems respectively:
\begin{thm} \label{l:ZEstApp}
Let $\alpha>1$, $0<\theta<\beta-\frac 1{2 \alpha}$ and $T>0$ be all arbitrary. For all $0<p<\alpha$ and
$\tl \theta \in [0,\theta)$, we have
\Be
\E \sup_{0 \leq t \le T}\|A^{\tl \theta} Z(t)\|^p_{H}<C
\Ee
where $C$ depends on $\alpha,\theta,p,T$.
\end{thm}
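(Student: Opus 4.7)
The plan is to deduce Theorem \ref{l:ZEstApp} as a direct specialization of Theorem \ref{t:ZEst} to the current setting (torus $\T$, Laplacian $A=-\Delta$ on the zero-mean subspace). The only nontrivial step is verifying that the abstract summability condition \eqref{a2} holds under the concrete decay assumption $C_1|k|^{-2\beta}\le|\beta_k|\le C_2|k|^{-2\beta}$ with $\beta>1+\frac{1}{2\alpha}$; once this is checked, the conclusion follows verbatim.

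First I would recall the spectral data: $Ae_k=|k|^2 e_k$ for $k\in\Z_*$, so $\gamma_k=|k|^2$. Given $\theta$ with $0<\theta<\beta-\frac{1}{2\alpha}$, I compute
\begin{equation*}
\sum_{k\in\Z_*}|\beta_k|^\alpha \gamma_k^{\alpha\theta}
\le C_2^{\alpha}\sum_{k\in\Z_*}|k|^{-2\alpha\beta}\,|k|^{2\alpha\theta}
= C_2^{\alpha}\sum_{k\in\Z_*}|k|^{-2\alpha(\beta-\theta)}.
\end{equation*}
The exponent satisfies $2\alpha(\beta-\theta)>2\alpha\cdot\tfrac{1}{2\alpha}=1$, so the series converges. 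Hence the hypothesis \eqref{a2} of Theorem \ref{t:ZEst} is satisfied for this value of $\theta$.

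Next I would simply invoke Theorem \ref{t:ZEst}: for every $p\in(0,\alpha)$ and every $\tl\theta\in[0,\theta)$,
\begin{equation*}
\E\sup_{0\le t\le T}\|A^{\tl\theta}Z(t)\|_H^p\le C\bigl(T^{p/\alpha}\vee T^{(1/\alpha+\theta-\tl\theta)p}\vee T^{(1+1/\alpha)p}\bigr),
\end{equation*}
where $C$ depends on $\alpha,\beta,\tl\theta,p$. Since $T$ is fixed, the right-hand side is a finite constant depending on $\alpha,\theta,p,T$ (absorbing $\tl\theta<\theta$ into a dependence on $\theta$), which is the claimed bound.

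There is essentially no obstacle here: the statement is a corollary of Theorem \ref{t:ZEst}, and the only substantive check is the convergence of the spectral sum, which is precisely the content of the inequality $\theta<\beta-\frac{1}{2\alpha}$. The hypothesis $\beta>1+\frac{1}{2\alpha}$ is in fact stronger than needed for Theorem \ref{l:ZEstApp} alone (it is used elsewhere for the Burgers nonlinearity); all we use here is that the range of admissible $\theta$, namely $(0,\beta-\tfrac{1}{2\alpha})$, is nonempty, which is immediate.
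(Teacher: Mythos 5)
Your proposal is correct and matches the paper's (implicit) argument: the paper presents Theorem \ref{l:ZEstApp} as nothing more than Theorem \ref{t:ZEst} read in the concrete setting $\gamma_k=|k|^2$, $|\beta_k|\asymp|k|^{-2\beta}$, and your verification that $\theta<\beta-\frac{1}{2\alpha}$ is exactly the condition making $\sum_{k\in\Z_*}|\beta_k|^\alpha\gamma_k^{\alpha\theta}<\infty$ is the only substantive check. The only caveat, inherited from the paper's own statement rather than introduced by you, is that the constant from Theorem \ref{t:ZEst} genuinely depends on $\tl\theta$ (it degenerates as $\tl\theta\uparrow\theta$), so it cannot literally be ``absorbed'' into a dependence on $\theta$ uniformly over $\tl\theta\in[0,\theta)$.
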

\begin{thm} \label{t:AccApp}
Under the same conditions as in Theorem \ref{l:ZEstApp}. For all $T>0$, $\tl \theta \in [0,\theta)$ and $\e>0$, we have
\Bes
\PP(\sup_{0 \le t\le T} \|A^{\tl \theta} Z(t)\|_H \le \e)>0.
\Ees
\end{thm}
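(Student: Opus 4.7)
The plan is to recognize that Theorem \ref{t:AccApp} is precisely Theorem \ref{l:Acc} specialized to the stochastic Burgers setting (torus Laplacian with prescribed decay of the noise coefficients). So the proof reduces to a routine verification that the abstract hypothesis \eqref{a2} holds in this concrete setting, after which the abstract result applies verbatim.

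First I would record the spectral data: here $\gamma_k = |k|^2$ and $C_1|k|^{-2\beta} \le |\beta_k| \le C_2 |k|^{-2\beta}$, indexed over $k \in \Z_*$. Assumption \ref{a} then reads $\sum_{k\in\Z_*} |k|^{-2\alpha\beta} < \infty$, which holds because $\beta > 1 + \frac{1}{2\alpha}$ gives $2\alpha\beta > 2\alpha + 1 > 1$. For the stronger hypothesis \eqref{a2} with exponent $\theta$ in the range $(0,\beta - \frac{1}{2\alpha})$, one computes
\[
\sum_{k \in \Z_*} |\beta_k|^\alpha \gamma_k^{\alpha\theta} \le C \sum_{k \in \Z_*} |k|^{-2\alpha(\beta-\theta)},
\]
and this converges exactly when $2\alpha(\beta-\theta) > 1$, i.e.\ $\theta < \beta - \frac{1}{2\alpha}$, which is the standing assumption of Theorem \ref{l:ZEstApp}. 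So both Assumption \ref{a} and \eqref{a2} hold for every admissible $\theta$.

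Second, with these assumptions verified, I would simply invoke Theorem \ref{l:Acc}: for any $\tilde\theta \in [0,\theta)$, any $T>0$ and any $\varepsilon>0$,
\[
\PP\Bigl(\sup_{0\le t\le T} \|A^{\tilde\theta} Z(t)\|_H \le \varepsilon\Bigr) > 0,
\]
which is the desired conclusion. No extra work on the nonlinearity is needed because $Z(t)$ here still denotes the linear stochastic convolution $\int_0^t e^{-A(t-s)}\, dL_s$, not the full Burgers solution.

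There is effectively no obstacle: the only point requiring any thought is ensuring that the range of $\theta$ allowed in the abstract theorem matches the range $(0, \beta - \tfrac{1}{2\alpha})$ advertised here, which is exactly what the convergence computation above confirms. All of the hard work, in particular the uniform estimate of Theorem \ref{t:ZEst}, the splitting of $I_1$ and $I_2$, the use of \cite[Proposition 3, Chapter VIII]{Ber96} and the scaling of stable processes, was already carried out in the proof of Theorem \ref{l:Acc}, and requires no modification in the present setting.
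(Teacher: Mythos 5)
Your proposal is correct and matches the paper's route: the paper gives no separate proof of Theorem \ref{t:AccApp}, presenting it as the direct specialization of Theorem \ref{l:Acc} to the Burgers setting, which is exactly what you do. Your verification that $\sum_{k\in\Z_*}|\beta_k|^\alpha\gamma_k^{\alpha\theta}\le C\sum_{k\in\Z_*}|k|^{-2\alpha(\beta-\theta)}<\infty$ precisely when $\theta<\beta-\tfrac{1}{2\alpha}$ is the (implicit) content of that specialization.
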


The following result is from \cite[Theorem 2.1]{DoXuZh12}, Theorem \ref{l:ZEstApp} plays a crucial
role in its proof (\cite[Section 4]{DoXuZh12}).
\begin{thm}
For all $x \in H$,
Eq. \eqref{e1.1} admits a unique mild solution with C$\grave{a}$dl$\grave{a}$g trajectory.
\end{thm}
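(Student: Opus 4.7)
The plan is to reduce the SPDE \eqref{e1.1} to a pathwise parabolic problem by writing $X(t) = Y(t) + Z(t)$, where $Z$ is the stochastic convolution studied in Theorems \ref{l:ZEstApp}--\ref{t:AccApp} and $Y := X - Z$. Subtracting off the linear stochastic part, $Y$ must satisfy the random (but for each fixed $\omega$ deterministic) viscous Burgers equation
\Be
\p_t Y + \nu A Y + \tfrac12 \p_\xi (Y+Z)^2 = 0, \qquad Y(0) = x.
\Ee
All of the stochastic content now enters through the coefficient $Z$, whose almost-sure regularity is controlled by Theorem \ref{l:ZEstApp}: since $\beta > 1 + \tfrac{1}{2\alpha}$, we may pick $\tl\theta \in (\tfrac14, \beta - \tfrac{1}{2\alpha})$, and Theorem \ref{l:ZEstApp} then gives $\sup_{0\le t\le T}\|A^{\tl\theta} Z(t)\|_H < \infty$ a.s. for every $T>0$, which on $\T$ embeds into $L^\infty(\T)$ via $\mcl D(A^{\tl\theta}) \hookrightarrow H^{2\tl\theta}(\T) \hookrightarrow L^\infty(\T)$.

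First I would establish local-in-time existence and uniqueness of a continuous $H$-valued $Y$ by applying the Banach fixed-point theorem to the mild formulation
\Bes
Y(t) = e^{-\nu A t} x - \tfrac12 \int_0^t e^{-\nu A (t-s)} \p_\xi (Y(s)+Z(s))^2 \, ds
\Ees
on a small random interval $[0, T_*(\omega)]$. The smoothing estimate $\|A^{1/2} e^{-\nu A t}\|\le C t^{-1/2}$ absorbs the derivative $\p_\xi$, while the quadratic term is controlled in a suitable negative Sobolev norm by combining $\|Y\|_H$ with the a.s.\ bound on $\|A^{\tl\theta} Z\|_H$ through a standard Sobolev product estimate on $\T$. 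Global existence on arbitrary $[0,T]$ then follows from the usual Burgers a priori energy bound: testing the equation against $Y$ in $H$ kills $\int_\T Y^2 \p_\xi Y\,d\xi = 0$, and the cross terms involving $Z$ are controlled via Young's inequality and Gr\"onwall by the random but finite quantity $\sup_{[0,T]}\|A^{\tl\theta} Z(t)\|_H$. Uniqueness follows from the same energy method applied to the difference of two solutions.

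To deduce the C\`adl\`ag trajectory of $X$, I would observe that the mild solution $Y$ built above is \emph{continuous} in $H$, since it is the convolution of the smoothing semigroup $e^{-\nu A t}$ with a forcing term that lies in $L^\infty([0,T]; \mcl D(A^{-s}))$ for some $s>0$; meanwhile, Lemma \ref{l:ConY} together with the theorem following it guarantees that $Z(\cdot)$ has a C\`adl\`ag version in $H$ whenever $L_\cdot$ does, and $L_\cdot$ itself is C\`adl\`ag as a cylindrical $\alpha$-stable process by \cite[Theorem 4.13]{PeZa07} and \cite[Lemma 3.1]{LiuZha12}. Hence $X = Y + Z$ is C\`adl\`ag in $H$.

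The main obstacle is the pathwise nonlinear estimate for $\p_\xi(YZ)$ and $\p_\xi Z^2$: the Burgers nonlinearity is critical in one dimension, so without a genuine gain of regularity in $Z$ one cannot close the contraction. This is precisely where Theorem \ref{l:ZEstApp} is decisive: it upgrades the bare fact $Z(t)\in H$ to the much stronger statement that $Z$ takes values in $\mcl D(A^{\tl\theta}) \hookrightarrow L^\infty(\T)$ uniformly on $[0,T]$ almost surely, making the pathwise Burgers theory applicable despite the rough $\alpha$-stable driving noise.
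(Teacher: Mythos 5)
You should first note that the paper does not actually prove this theorem: it is quoted from \cite[Theorem 2.1]{DoXuZh12}, and the only information given here is that Theorem \ref{l:ZEstApp} plays a crucial role in \cite[Section 4]{DoXuZh12}. So your proposal is being measured against a citation, not a written argument. With that caveat, your route is the standard (and almost certainly the intended) one: decompose $X=Y+Z$ with $Z$ the stochastic convolution, treat the shifted Burgers equation for $Y$ pathwise (local fixed point in the mild formulation, then a global energy bound), and feed in Theorem \ref{l:ZEstApp} to get $\sup_{0\le t\le T}\|A^{\tl\theta}Z(t)\|_H<\infty$ a.s.\ with $\mcl D(A^{\tl\theta})\hookrightarrow L^\infty(\T)$; this is precisely the role the paper ascribes to the convolution estimate, and the C\`adl\`ag claim for $X=Y+Z$ follows since $Y$ is continuous and $Z$ is C\`adl\`ag in $H$ (for the latter, the clean reference is \cite[Theorem 9.3]{PeZa07}, as used in the proof of Lemma \ref{l:ConY}, rather than the iff-theorem of Application 3, which presupposes the C\`adl\`ag property of $L$).

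Two places in your sketch deserve tightening, though both are repairable by standard means. First, in the energy estimate the term $\int_\T Z\,\p_\xi Z\, Y\,d\xi$ is not controlled by $\|Z\|_{L^\infty}$ and $\|Y\|_H$ alone; you must either integrate by parts to move the derivative off $Z$ (writing it as $-\tfrac12\int_\T Z^2\,\p_\xi Y\,d\xi$ and absorbing $\|\p_\xi Y\|_H$ into the dissipation), or exploit that $\beta-\tfrac1{2\alpha}>1$ allows $\tl\theta>3/4$, so that $\p_\xi Z$ is itself bounded; your stated choice $\tl\theta>1/4$ works only with the integration-by-parts variant, and the same remark applies to $\int_\T Y^2\,\p_\xi Z\,d\xi$. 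Second, the energy and uniqueness computations are carried out formally at a regularity that mild $H$-valued solutions do not a priori possess, so they need to be justified via Galerkin approximation or by using the parabolic smoothing of $e^{-\nu A t}$ for $t>0$ together with continuity at $t=0$. With these standard additions your outline is a correct proof strategy, consistent with how the cited work uses Theorem \ref{l:ZEstApp}.
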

\ \ \ \

\subsection{Application 5.} Let us use the integration by parts technique to study the stochastic convolution of Wiener noises.
Let $Q$ be Hilbert-Schmidt in $H$, i.e., $Q$ is a linear operator such that
\Be
\|Q\|_{HS}^2:=\sum_{k \ge 1} \|Qe_k\|^2_H<\infty.
\Ee
\begin{thm}
Let $(W_t)_{t \ge 0}$ be a cylindrical white Wiener noise in $H$, i.e., $W_t$ can be formally written as
$W_t=\sum_{k \ge 1} w_k(t) e_k$ (\cite[pp 48]{DPZ92}) with
$\{w_k(t)\}_{k \ge 1}$ being a sequence of i.i.d. 1d Brownian motions.
Let $(Z_W(t))_{t \ge 0}$ be the stochastic convolution defined by
\Be
Z_W(t)=\int_0^t e^{-A(t-s)} QdW_s.
\Ee
If we further assume that there exists an (arbitrary small) $\theta>0$ so that $\|A^\theta Q\|_{HS}<\infty$, then the following
statements hold:
\begin{enumerate}
\item For all $\tl \theta \in [0, \theta)$, $T>0$ and $p>0$, we have
\Bes
\E \sup_{0 \le t \le T} \|A^{\tl \theta} Z_W(t)\|^p_H \le C T^{3p/2} \vee T^{(1/2+\theta-\tl \theta)p} \vee T^{p/2},
\Ees
where $C$ depends on $\tl \theta$ and $p$.
\item $(Z_W(t))_{t \ge 0}$ is continuous in $\mcl D(A^{\tl \theta})$ with $\tl \theta \in [0,\theta)$.
\end{enumerate}
\end{thm}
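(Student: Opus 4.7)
I would mimic the proofs of Theorem \ref{t:ZEst} and Lemma \ref{l:ConY} by performing the same integration by parts for the Wiener stochastic convolution. Applying It\^o's product formula to $s \mapsto e^{-A(t-s)} QW_s$ and using that $QW_s = \sum_{k \ge 1} w_k(s) Q e_k$ is a genuine $H$-valued martingale (since $Q$ is Hilbert--Schmidt), one obtains
\Bes
Z_W(t) = QW_t - Y_W(t), \qquad Y_W(t) := \int_0^t A e^{-A(t-s)} QW_s \, ds.
\Ees
The point is that the martingale part $QW_t$ is now directly amenable to Doob's inequality, whereas $Y_W(t)$ is a pathwise Bochner integral whose regularity is controlled by the deterministic estimate \eqref{e:EAtEst}. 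This is exactly the same architecture as in the $\alpha$-stable case, but with the Gaussian integrability giving us all moments for free.

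\textbf{Part (1).} For the martingale term, $\tl \theta < \theta$ gives $\|A^{\tl \theta} Q\|_{HS} \le \gamma_1^{\tl \theta - \theta} \|A^\theta Q\|_{HS} < \infty$, so $A^{\tl \theta} QW_t$ is an $H$-valued Gaussian martingale with $\E \|A^{\tl \theta} QW_t\|^2_H = t \|A^{\tl \theta} Q\|_{HS}^2$; Gaussianity together with Doob's inequality (and H\"older for $0<p\le 1$, as in the proof of Theorem \ref{t:ZEst}) yields $\E \sup_{0\le t \le T} \|A^{\tl \theta} QW_t\|^p_H \le C T^{p/2}$. For $Y_W$ I would estimate
\Bes
\|A^{\tl \theta} Y_W(t)\|_H \le \sup_{0 \le s \le t} \|A^{\theta} Q W_s\|_H \int_0^t \|A^{1+\tl \theta-\theta} e^{-A(t-s)}\| \, ds,
\Ees
and apply \eqref{e:EAtEst}. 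When $1+\tl \theta-\theta \ge 0$ the integrand is bounded by $C(t-s)^{-(1+\tl \theta-\theta)}$, producing a factor $T^{\theta-\tl \theta}$ (integrable since $\tl \theta < \theta < 1+\tl \theta$); when $1+\tl \theta-\theta < 0$ the operator is bounded and the integral grows like $T$. Taking the worse bound in each case and combining with the $L^p$-estimate for $\sup_s \|A^\theta QW_s\|_H$ yields the three powers $T^{p/2}$, $T^{(1/2+\theta-\tl \theta)p}$ and $T^{3p/2}$ appearing in the statement.

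\textbf{Part (2) and main obstacle.} For continuity in $\mcl D(A^{\tl \theta})$ I would treat the two pieces of $Z_W = QW - Y_W$ separately. The process $A^{\tl \theta} QW_t$ is a Wiener process with Hilbert--Schmidt covariance $A^{\tl \theta} QQ^{*} A^{\tl \theta}$ and so admits a continuous $H$-valued modification by classical Wiener theory. For $Y_W$, I would establish continuity of $A^{\tl \theta} Y_W(t) = \int_0^t A^{1+\tl \theta-\theta} e^{-A(t-s)} A^{\theta} QW_s \, ds$ by splitting $A^{\tl \theta}[Y_W(t+h) - Y_W(t)]$ into a small integral over $[t, t+h]$ (the kernel is locally integrable for $\tl \theta < \theta$) and an integral over $[0,t]$ to which dominated convergence applies, using the a.s.\ continuity and local boundedness of $s \mapsto A^\theta QW_s$. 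The main obstacle I anticipate is the case split $1+\tl \theta-\theta \ge 0$ versus $<0$ in Part (1), because the short-time singularity of $\|A^{\gamma} e^{-Au}\|$ disappears once $\gamma \le 0$ while long-time bounds take over; recovering the correct $T$-powers in both regimes is exactly the bookkeeping already carried out in the stable-noise setting of Theorem \ref{t:ZEst}, so the difficulty is bounded in scope.
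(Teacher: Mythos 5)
Your proposal follows essentially the same route as the paper: the identical integration-by-parts decomposition $Z_W = QW - Y_W$, Doob's inequality plus Gaussian moment bounds for the martingale part, and the smoothing estimate \eqref{e:EAtEst} with the same $T$-power bookkeeping for the Bochner integral. Your argument for part (2) is in fact more detailed than the paper's one-line claim, but it is the same decomposition-based reasoning, so there is nothing substantively different to flag.
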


\begin{proof}
Applying It$\hat{o}$ formula (\cite{PrClRo07}) to $e^{-A(t-s)}Q W_s$, we get
\Be
Z_W(t)=Q W_t-\int_0^t A e^{-A(t-s)} Q W_s ds.
\Ee
This immediately implies (2).

Now let us show (1). Since the
case of $0<p<2$ immediately follows from H$\ddot{o}$lder inequality and the
case of $p \ge 2$, we only need to show the inequality for the case $p \ge 2$.
Since $A^{\theta} Q$ is Hilbert-Schmidt, by Doob's martingale inequality, we have
\Be
\begin{split}
\E \sup_{0 \le t \le T} \|A^{\tl \theta} QW_t\|^p_{H} \le \lt(\frac{p}{p-1}\rt)^p \E  \|A^{\tl \theta} QW_T\|^p_{H}
\le C T^{p/2},
\end{split}
\Ee
where $C$ depends on $p$ and $\|A^{\theta} Q\|_{HS}$, and the last inequality is by Fernique theorem for Gaussian measure (\cite{Hai09}).
\\

Denote
$$I:=\int_0^t A e^{-A(t-s)} Q W_s ds,$$
Observe
\Be
\begin{split}
\|A^{\tl \theta} I\|_H  \le \int_0^t \|A^{1-(\theta-\tl \theta)} e^{-A(t-s)}\| \sup_{0 \le s \le T}\|A^\theta Q W_s\| ds.
\end{split}
\Ee
This, together with \eqref{e:EAtEst} and Doob's inequality, implies
\Be
\begin{split}
\E \sup_{0 \le t \le T} \|A^{\tl \theta} I\|^p_H &\le \E \sup_{0 \le t \le T} \|A^\theta Q W_t\|_H^p \sup_{0 \le t \le T} \lt|
\int_0^t (t-s)^{-1+(\theta-\tl \theta)} \vee 1 ds\rt|^p  \\
& \le C T^{3p/2} \vee T^{(1/2+\theta-\tl \theta)p}
\end{split}
\Ee
where $C$ depends on $p$ and $\|A^{\theta} Q\|_{HS}$. \\

Combining the two estimates, we immediately get the desired inequality in (1).
\end{proof}


\bibliographystyle{amsplain}

\end{document}